\documentclass{amsart}
\usepackage{amsmath,amssymb,amsthm,mathtools,mathdots,nicefrac,tikz,enumitem,url,graphicx}

\usepackage{tikz-3dplot}

\usepackage[a4paper,top=3cm,bottom=2cm,left=3cm,right=3cm,marginparwidth=1.75cm]{geometry}
\usepackage{pst-platon}
\usepackage{manfnt}
\usepackage{appendix}

\usepackage{algorithm}
\usepackage[noend]{algpseudocode}
\usepackage{mdframed}

\theoremstyle{definition} 
\newtheorem{theorem}{Theorem}

\newtheorem{proposition}{Proposition}
\newtheorem{definition}{Definition}

\usepackage{natbib}
\usepackage{mathrsfs}

\title{The tropical crossing number of a finite graph}
\author{Noah Cape and Ralph Morrison}
\date{}

\begin{document}

\maketitle

\begin{abstract}
    In 2015, Cartwright et al. showed that any $3$-regular metric graph arises as the skeleton of a tropical plane curve with nodes allowed.  They introduced the tropical crossing number of a metric graph as the minimum number of nodes required for that graph with the prescribed lengths.  We introduce the tropical crossing number of a finite, non-metric graph, the minimum number of nodes required to achieve that graph with \emph{any} lengths on its edges. We prove that for any positive integer $d$ there exists a graph whose tropical crossing number is equal to $d$; moreover, this graph can be chosen with any prescribed graph-theoretic crossing number at most $d$. We then introduce and use computational methods to find the tropical crossing number of the smallest non-tropically planar graph, the lollipop graph of genus $3$.  We also show that our tropical crossing number can grow quadratically in the number of vertices of the graph.
\end{abstract}

\section{Introduction}

Tropical geometry provides a piecewise linear analog of algebraic geometry, transforming algebraic varieties into polyhedral complexes.  In the one-dimensional case, tropical curves can be viewed either in an embedded context as weighted, balanced, one-dimensional polyhedral complexes in $\mathbb{R}^n$; or in a more abstract context as a metric graph. The story is particularly nice for smooth plane tropical curves, where the embedding in $\mathbb{R}^2$ is dual to a regular unimodular triangulation of a lattice polygon; the metric graph can then be obtained through a skeletonization process, yielding a $3$-regular skeleton. See Figure \ref{fig:genus_3_example} for an illustration of a genus $3$ example, where the genus refers equivalently to the number of interior points of the lattice polygon, to the first Betti number of the polyhedral complex, and to the first Betti number of the metric graph.

\begin{figure}[hbt]
    \centering
    \includegraphics[width=0.5\linewidth]{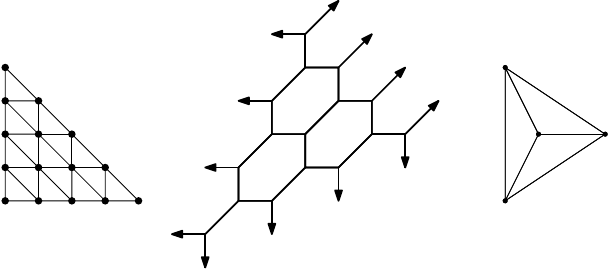}
    \caption{Example of a triangulation of a genus 3 Newton polygon along with its dual tropical curve and its skeleton.}
    \label{fig:genus_3_example}
\end{figure}

Perhaps unsurprisingly, smooth plane tropical curves cannot capture every possible metric graph.  For instance, the skeleton of a smooth plane tropical curve must always be planar. There are metric obstructions as well: although every $3$-regular graph of genus $3$ is planar, only $29.5\%$ of metric graphs of genus $3$ are achievable as skeletons of smooth plane tropical curves \cite{Brodsky_2015}.  For genus $5$ and greater, dimensionality arguments imply that $0\%$ of metric graphs appear as the skeleton of a smooth tropical plane curve.

To work around this, \cite{Cartwright_2015} introduced the notion of a nodal tropical curve, where unimodular triangulations are relaxed to allow for parallelograms of area $1$.  These parallelograms correspond to $4$-regular ``nodes'' in the tropical curve, which are interpreted as two edges that happen to cross in that particular immersion. See Figure \ref{fig:genus_3_example_crossing} for an illustration. Their results imply that \textbf{any} $3$-regular metric graph is the skeleton of a nodal tropical curve.  Indeed, their result is stronger:  they show that any trivalent abstract tropical curve\footnote{An abstract tropical curve is a connected metric graph with infinite edge lengths allowed on edges incident to $1$-valent vertices.} has an immersion as a nodal tropical curve. The authors of \cite{Cartwright_2015} also introduced the tropical crossing number $\text{TCN}(\Gamma)$ of an abstract tropical curve $\Gamma$, the minimum number of nodes required to realize $\Gamma$ as a tropical curve.

\begin{figure}[hbt]
    \centering
    \includegraphics[width=0.5\linewidth]{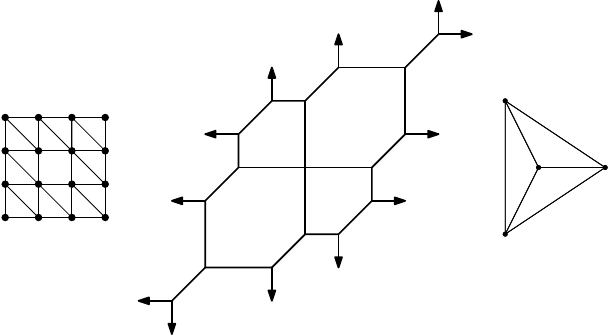}
    \caption{Example of a nodal subdivision of a genus 4 Newton polygon along with its dual nodal tropical curve and its skeleton.}
    \label{fig:genus_3_example_crossing}
\end{figure}

Ignoring the metric on the graph, we introduce the tropical crossing number $\text{TCN}(G)$ of a connected, trivalent, finite (that is, non-metric) graph $G$.  This is the minimum number of nodes required in a tropical curve to obtain some metric version of $G$ as the skeleton.  Equivalently, 
\[\text{TCN}(G)=\min_{\Gamma} \, \text{TCN}(\Gamma),\]
where the minimum is taken over all abstract tropical curves $\Gamma$ with underlying finite graph $G$ as their skeleton.

Many prior results show, or may be interpreted as showing, that certain graphs have strictly positive tropical crossing number; see for instance \cite[Corollary 4.2]{Cartwright_2015}, \cite[Theorem 3.4]{coles2020tropicallyplanargraphs}, \cite[Lemma 3.5]{morrison2021tropicalhyperellipticcurves}, and \cite[Theorem 15]{Joswig2021}.  However, none of these provide insight into the actual value of $\text{TCN}(G)$, beyond certain graphs with $\text{TCN}(G)=0$. Although \cite[Proposition 4.7]{Cartwright_2015} provides a metric graph $\Gamma$ with $\text{TCN}(\Gamma)=1$, the underlying simple graph has $\text{TCN}(G)=0$.  Thus it is not immediately clear whether there exists a graph $G$ with $\text{TCN}(G)=1$, or more generally for which $k$ there exists a graph $G$ with $\text{TCN}(G)=k$.  Our first theorem provides an answer to this question, while also showing a lack of constraint on tropical crossing number compared to classical crossing number $\text{CN}(G)$.

\begin{theorem}\label{thm:main}
    For any integers $0\leq d \leq k$, there exists a graph $G$ with $\text{CN}(G)=d$ and $\text{TCN}(G)=k$.
\end{theorem}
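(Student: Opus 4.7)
The plan is to construct, for each pair $(d,k)$ with $0 \leq d \leq k$, an explicit graph $G_{d,k}$ by chain-gluing two kinds of trivalent ``building blocks.'' The first is a trivalent planar graph $A$ with $\text{CN}(A)=0$ and $\text{TCN}(A)=1$; the lollipop graph of genus $3$ (the smallest non-tropically planar graph, studied later in the paper) is a promising candidate, assuming its tropical crossing number is verified to equal $1$. The second building block $B$ is a trivalent non-planar graph with $\text{CN}(B)=\text{TCN}(B)=1$; for instance, a trivalent modification of $K_{3,3}$, with $\text{CN}(B)=1$ by classical results and $\text{TCN}(B) \geq \text{CN}(B)$ from the fact that any nodal tropical immersion is, in particular, a planar drawing of the skeleton with at most one crossing per node.

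The gluing operation is the trivalent \emph{bridge sum}: given trivalent graphs $G_1,G_2$ and edges $e_i\subset G_i$, subdivide each $e_i$ with a new vertex $w_i$ and add a new edge $w_1 w_2$; the result $G_1 \oplus G_2$ is trivalent. Define
\[
G_{d,k} \;=\; \underbrace{B\oplus B\oplus \cdots \oplus B}_{d \text{ copies}} \;\oplus\; \underbrace{A \oplus A \oplus \cdots \oplus A}_{k-d \text{ copies}},
\]
with $G_{0,0}$ taken to be any trivalent tropically planar graph, such as $K_4$.

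The upper bounds are obtained by placing optimal planar drawings (respectively, optimal nodal tropical immersions) of the building blocks in disjoint regions of $\mathbb{R}^2$ and routing the bridge edges between them so as to avoid introducing crossings or nodes. This gives $\text{CN}(G_{d,k}) \leq d$ and $\text{TCN}(G_{d,k}) \leq k$. The classical lower bound $\text{CN}(G_{d,k}) \geq d$ then follows from additivity of the crossing number across $1$-edge cuts, which are precisely the bridges built into $G_{d,k}$.

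The main obstacle, and the heart of the argument, is the tropical lower bound $\text{TCN}(G_{d,k}) \geq k$: we need additivity of $\text{TCN}$ under the bridge sum. The strategy is to exploit freedom in the metric: by assigning very large lengths to the bridge edges, any tropical immersion realizing the associated metric graph must display each bridge as a long straight segment in $\mathbb{R}^2$, and the balancing condition at its endpoints together with the long length forces the two incident blocks to be immersed into essentially disjoint regions of the plane. This localization then allows one to restrict the immersion block-by-block and conclude that each copy of $A$ or $B$ contributes at least $\text{TCN}(A) = 1$ or $\text{TCN}(B) = 1$ nodes. Making this localization argument fully rigorous within the balanced polyhedral framework of tropical immersions, and ruling out any ``shared'' nodes between blocks, is where I expect the substantive technical work to lie.
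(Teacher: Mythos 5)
Your overall architecture --- bridge-summing a planar building block with $\text{TCN}=1$ and a non-planar block with $\text{CN}=\text{TCN}=1$, using $d$ copies of one and $k-d$ of the other --- is exactly the paper's. But two of your steps have genuine gaps, and the one you identify as the heart of the argument is set up with the wrong quantifier. The lower bound $\text{TCN}(G_{d,k})\geq k$ cannot be obtained by ``assigning very large lengths to the bridge edges'': by definition $\text{TCN}(G)=\min_{\Gamma}\text{TCN}(\Gamma)$ over \emph{all} metrics $\Gamma$ on $G$, so a lower bound must hold for every choice of edge lengths, including metrics with short bridges where nothing forces the blocks into disjoint regions. You do not get to choose the metric --- that freedom belongs to the minimization. (This is precisely why the finite-graph invariant differs from the metric one: the equilateral theta graph has $\text{TCN}(\Gamma)=1$ while its underlying graph has $\text{TCN}(G)=0$.) The paper's substitute is metric-free and topological: if two $2$-connected components of the skeleton meet at one node, the bounded faces containing the two crossing edges are simple closed curves meeting transversely, so they must meet at a second node (Proposition \ref{proposition:one_int_means_two_int}); combined with the fact that each block needs at least one node on its own to resolve its non-planarity or crowdedness, a counting argument then charges at least one node to each of the $k$ blocks in any immersion.

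The upper bound $\text{TCN}(G_{d,k})\leq k$ also does not follow from ``placing optimal nodal immersions in disjoint regions and routing the bridge edges.'' A tropical plane curve is balanced and dual to a regular subdivision of a Newton polygon; you cannot translate pieces of tropical curves and freely route connecting edges. The paper must find a single parallelogram admitting two stitchable nodal subdivisions $\Delta_{np}$ and $\Delta_{cr}$ (the ad hoc polygons realizing $K_{3,3}$ and the crowded graph with one node do not glue) and prove that concatenating regular subdivisions along a lattice-length-one edge stays regular (Proposition \ref{prop:patching}). Finally, your proposed planar block, the genus-$3$ lollipop, has $\text{TCN}=3$, not $1$ (Theorem \ref{thm:lollipop}), so it cannot serve as the block $A$; the paper uses a crowded genus-$5$ graph instead. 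Your $\text{CN}$ statements (additivity over bridges, and $\text{TCN}\geq\text{CN}$ for the non-planar block) are fine.
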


It is possible, though somewhat inefficient, to computationally determine the value of $\text{TCN}(G)$. The key tool here is \cite[Proposition 3.2]{Cartwright_2015}, which implies that if the skeleton of a nodal tropical curve is connected, then the genus of the skeleton is $i-n$, where $i$ is the number of interior points in the lattice polygon and $n$ is the number of nodes.  Thus if $G$ has genus $g$, one can first enumerate all regular, unimodular triangulations of genus $g$ polygons, to see if any of them have $G$ as a skeleton, implying $\text{TCN}(G)=0$; this is the framework established in \cite{Brodsky_2015}.  If $\text{TCN}(G)\neq0$, one can then enumerate all regular subdivisions of genus $g+1$ polygons that are unimodular triangulations except for one unit parallelogram, to see if any of them have $G$ as a skeleton, implying $\text{TCN}(G)=1$.  If $\text{TCN}(G)\neq1$, we repeat this process, increasing both the genus of the polygons and the number of unit parallelograms by one each time, until $G$ is found.

Using this computational strategy, we determine the tropical crossing number of the smallest non-troplanar finite graph, the genus $3$ lollipop graph.  By \cite[Proposition 4.1]{Cartwright_2015}, this graph has $\text{TCN}(G)>0$.  By ruling out the possibility of it arising with one node from a genus $4$ polygon, or with two nodes from a genus $5$ polygon, we obtain the following result.

\begin{theorem}\label{thm:lollipop} The lollipop graph of genus $3$ has tropical crossing number $3$.
\end{theorem}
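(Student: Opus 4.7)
The plan is to establish matching upper and lower bounds of $3$. For the upper bound $\text{TCN}(G) \leq 3$, I would exhibit an explicit regular subdivision of a lattice polygon with six interior points, using only unimodular triangles except for exactly three unit parallelograms. By \cite[Proposition 3.2]{Cartwright_2015}, the skeleton of the dual nodal tropical curve then has genus $6 - 3 = 3$. After producing a candidate subdivision, the remaining task is to read off the dual nodal tropical curve, apply skeletonization (contracting bridges and edges incident to leaves coming from boundary lattice points), and verify that the resulting combinatorial type is the lollipop. A natural place to search is among small genus-$6$ polygons such as rectangles or triangles where three parallelograms can be inserted near a cycle to produce the two loops and the bridge structure characteristic of the lollipop.

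For the lower bound $\text{TCN}(G) \geq 3$, three separate subcases must be excluded. The case $\text{TCN}(G) = 0$ is already settled: \cite[Proposition 4.1]{Cartwright_2015} shows that the lollipop is not the skeleton of any smooth plane tropical curve. The cases $\text{TCN}(G) = 1$ and $\text{TCN}(G) = 2$ are then ruled out by the computational enumeration strategy described just before the theorem statement. For $\text{TCN}(G) = 1$, one enumerates (up to unimodular equivalence) all lattice polygons with four interior points, and for each polygon produces every regular subdivision whose maximal cells are all unimodular triangles except for exactly one unit parallelogram; for each such subdivision, one builds the dual tropical curve, extracts its skeleton as an abstract graph, and checks via graph isomorphism whether it matches the lollipop. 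For $\text{TCN}(G) = 2$, the same procedure is repeated over genus-$5$ lattice polygons with subdivisions containing exactly two unit parallelograms.

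The main obstacle is the computational enumeration in the two exclusion steps, especially the genus-$5$ case: the collection of equivalence classes of lattice polygons grows quickly with the number of interior points, and each polygon typically admits many regular subdivisions of the prescribed form. An efficient implementation must generate polygons up to unimodular equivalence without redundancy, enumerate regular subdivisions while correctly enforcing both regularity and the exact parallelogram count, and perform fast abstract-graph isomorphism against the fixed target graph. A secondary, more subtle obstacle is ensuring completeness of the enumeration: one must argue that the search space considered really does cover every way the lollipop could arise with one or two nodes, which relies on \cite[Proposition 3.2]{Cartwright_2015} controlling the genus of the Newton polygon in terms of the number of nodes. Once both the construction and the two computational exclusions are in place, combining them yields the claimed value $\text{TCN}(G) = 3$.
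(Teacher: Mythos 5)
Your proposal follows essentially the same route as the paper: an explicit genus-$6$ nodal subdivision with three unit parallelograms for the upper bound, and exhaustive enumeration of one-parallelogram subdivisions of genus-$4$ polygons and two-parallelogram subdivisions of genus-$5$ polygons (with the genus-$0$ case handled by \cite[Proposition 4.1]{Cartwright_2015}) for the lower bound. The only difference is that the paper trims the search space to \emph{maximal, non-hyperelliptic} polygons, justified by \cite[Lemma 2.6]{Brodsky_2015} and by Proposition \ref{prop:nodal_hyperelliptic_dual_graphs} showing hyperelliptic polygons only yield chains, whereas you enumerate all polygons of the given genus --- a correct but less efficient variant.
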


See Figure \ref{fig:lollipop} for an illustration of the lollipop graph, as well as Figure \ref{fig:tcn_lollipop} for a realization of it as the skeleton of a tropical curve with $3$ nodes.

Our paper is organized as follows. In Section \ref{section:two} we present an overview of lattice polygons, graphs, and tropical curves. We then build up new results and prove Theorem \ref{thm:main} in Section \ref{section:three}. Then in Section \ref{section:four} we determine the tropical crossing number of the smallest non-troplanar graph, the lollipop graph. And in Section \ref{section:five} we will prove a new lower bound on the tropical crossing number of finite graphs. Finally in the Appendix \ref{section:six} we have included several examples of non-troplanar graphs along with a nodal subdivision with minimal unit parallelograms alongside its dual tropical curve.

\medskip

\noindent \textbf{Acknowledgements.}  The authors thank Leo Goldmakher for feedback and comments on an earlier draft of this work, as well as Jo\"rg Rambau for technical support and insights during the development of the software for computing the tropical crossing number.

\section{Background and Definitions}\label{section:two}

In this section we establish the background necessary for stating and proving our later results. This will cover lattice polygons, graphs, and tropical curves.

\subsection{Lattice polygons}


A \emph{lattice point} is any point in $\mathbb{R}^2$ with integers coordinates. A \emph{lattice polygon} is a polygon whose vertices are lattice points. Unless otherwise stated, all polygons we consider are lattice polygons, and will be convex. Let $\Delta^{(1)}$ be the convex hull of the $g$ interior lattice points of $\Delta$. We refer to $\Delta^{(1)}$ as the \emph{interior polygon} of $\Delta$. If $\Delta^{(1)}$ is two-dimensional, we say $\Delta$ is \emph{nonhyperelliptic}; otherwise, we say $\Delta$ is \emph{hyperelliptic}. The \emph{genus} of a lattice polygon is its number of interior lattice points.

We now consider subdivisions of lattice polygons. A \emph{subdivision} of a lattice polygon is a partition of the polygon into lattice polygons, such that the intersection of any two subpolygons is a mutual face (either an edge, a vertex, or the empty set). A \emph{triangulation} is a subdivision where each subpolygon is a triangle. A triangulation is \emph{unimodular} if each triangle has minimum possible area, 1/2. One way to construct a subdivision is using a height function. Let $S$ be the set of point of a lattice polygon and $h: S \to \mathbb{R}$ be a height function on $S$, lifting each point $(i,j) \in S$ to some height $h((i,j))$. Taking the upper convex hull of the each point in $S$ at the height induces by $h$ we have a three-dimensional polytope, a polyhedron. Then projecting down the faces of the polyhedron visible from above induces a subdivision of the lattice polygon, $S$, which we see in Figure~\ref{fig:subdivision}. Any subdivision that arises from a height function is called \emph{regular}. A \emph{split} in a subdivision is an edge of lattice length one, with end points on different boundary edges of the lattice polygon. If a split divides a lattice polygon into two lattice polygons each with positive genus, we refer to it as a \emph{non-trivial split}.

\begin{figure}[hbt]
    \centering
    \includegraphics[width=0.3\linewidth]{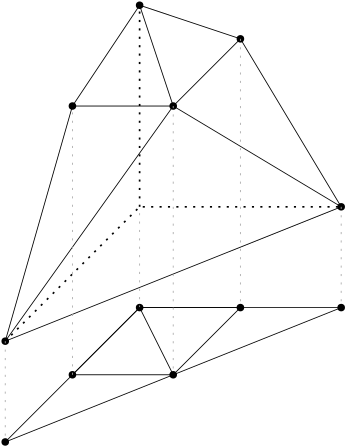}
    \caption{An example of how a height function on the points of a lattice polygon induces a subdivision of that lattice polygon.}
    \label{fig:subdivision}
\end{figure}

\subsection{Graphs}

For the purposes of our later results we introduce a small amount of background on graphs then connect them to our introduction of tropical curves. A \emph{graph} $G = (V, E)$ is a finite collection of vertices $V$ joined by a finite collection of edges $E$. We allow multiple edges between pairs of vertices and also \emph{loops}, which are edges from a vertex to itself. A graph is \emph{connected} if it is possible to move from any vertex to any other vertex using the edges. We say that a graph is \emph{planar} if it can be drawn in $\mathbb{R}^2$ without any edge intersections. The \emph{degree} of a vertex is the total number of edges incident to that vertex, where loops are counted twice. We say that a graph is \emph{trivalent} if every vertex has degree 3. An edge $e$ of a graph is a \emph{bridge} if the graph $G \setminus \{ e \}$ obtained by removing $e$ that is has more connected components than $G$. Finally, the \emph{genus}\footnote{We remark that this differs from the definition of genus used in terms of embedding graphs on surfaces.  Our terminology follows that of \cite{baker_norine}, which draws a strong parallel between graphs of first Betti number $g$ and algebraic curves of genus $g$.} of a graph is $g := |E| - |V| + 1$; equivalently it is the first Betti number of the graph.

A graph is \emph{planar} if it can be drawn in the plane with no edges crossing, and is called \emph{non-planar} otherwise.  The \emph{crossing number} of a graph $G$, denoted $\text{CN}(G)$, is the minimum number of crossings in a drawing of $G$ in the plane, where each crossing involves at most $2$ edges.  Thus a graph $G$ has $\text{CN}(G)=0$ if and only if $G$ is planar.

\subsection{Tropical Plane Curves}

We will now introduce tropical plane curves which will allow us to connect the three topics covered thus far in this section. Tropical plane curves are defined by polynomials $p(x,y)$ over the \emph{tropical semiring} $(\mathbb{R} \cup \{ -\infty \}, \odot, \oplus)$ where $a \odot b = a + b$ and $a \oplus b = \max \{a, b\}$. The subset of $\mathbb{R}^2$ defined by $p(x,y)$ are the sets of points where the maximum is achieved at least twice. By the Structure Theorem \cite[Theorem 3.3.5]{MaclaganSturmfelsTropicalGeometry}, a tropical plane curve is a balanced 1-dimensional polyhedral complex, consisting of edges and rays meeting at vertices. The \emph{Newton polygon} of a tropical polynomial $p(x,y)$ is the convex hull of all exponent vectors of terms that appear in $p(x,y)$ with non-$(-\infty)$ coefficients.  By \cite[Proposition 3.1.6]{MaclaganSturmfelsTropicalGeometry}, every tropical plane curve is dual to a regular subdivision, induced by the coefficients of the polynomial, of its Newton polygon. In short the duality means that a tropical curve has one vertex for each subpolygon in the subdivision; that two vertices of a tropical curve are joined by an edge if and only if the dual subpolygons share an edge, with slope perpendicular to the dual edge; and that in the tropical curve there is a ray for each subdivision of a boundary edge of a subpolygon, again perpendicular. We say that a tropical plane curve is \emph{smooth} if the corresponding subdivision of its Newton polygon is a unimodular triangulation. A smooth tropical plane curve has first Betti number equal to the genus of its Newton polygon.  

\begin{figure}[hbt]
    \centering
    \includegraphics[width=0.85\linewidth]{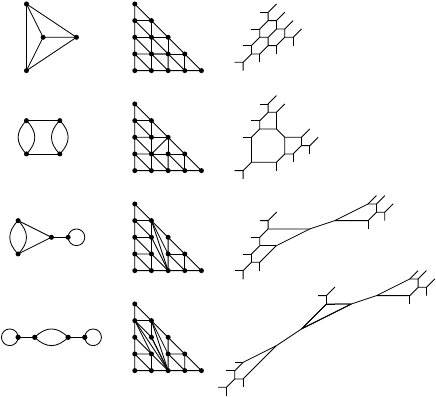}
    \caption{Genus 3 graphs, subdivisions and tropical curves.}
    \label{fig:genus_3}
\end{figure}

The \emph{skeleton} of a smooth tropical plane curve is a minimal subset onto which it admits a deformation retract, considered as a metric graph.  In the case that the tropical curve has first Betti number equal to $0$ (that is, is a tree), the skeleton is a point.  If its first Betti number is equal to $1$, the skeleton is the cycle. Now assume the first Betti number is at least $2$.  The {skeleton} of a smooth tropical plane curve can be obtained by removing all rays; iteratively removing vertices of degree $1$ and their adjacent edge; and then smoothing over all 2-valent vertices. This skeleton is then a connected trivalent planar graph of genus $g$. Figure~\ref{fig:genus_3} illustrates four genus 3 graphs, each with a unimodular triangulation and a dual tropical curve whose skeleton is the given graph. An alternate view of tropical plane curves views them as metric graphs with finite lengths on the edges and some infinite rays, a structure we refer to as an \emph{abstract tropical curve}.

Ignoring the metric on the skeleton, we can ask which graphs appear in smooth plane tropical curves.  The following terminology was introduced in \cite{coles2020tropicallyplanargraphs}, though the idea is implicit in \cite{Brodsky_2015}.

\begin{definition}
    A (combinatorial) graph that is the skeleton of some smooth tropical plane curve is called \emph{tropically planar}, or \emph{troplanar} for short. If a graph is not the skeleton of any smooth tropical plane curve, it is called \emph{non-troplanar}.
\end{definition}

A great deal of previous work provides criteria that prohibits a graph from being troplanar. We briefly outline some of the known criteria. Certainly if a graph is non-planar, then it is non-troplanar, since a smooth plane tropical curve contains a planar embedding of its skeleton. For instance, the complete bipartite graph $K_{3,3}$ pictured on the left in Figure \ref{figure:non_troplanar_graphs} is non-planar, so cannot be troplanar.

A less obvious criterion, which applies to some planar graphs, was developed in \cite[Proposition 4.1]{Cartwright_2015}.  An alternate proof appears in \cite[Proposition 8.3]{Brodsky_2015}, which also coined the following terminology.

\begin{figure}[hbt]
    \centering
\includegraphics{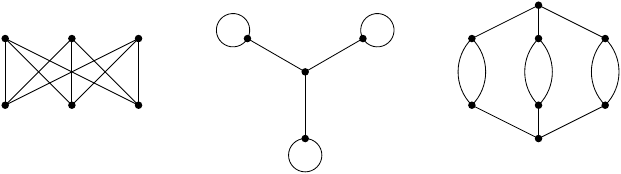}
    \caption{The non-planar graph $K_{3,3}$; the lollipop graph of genus $3$; and a crowded graph of genus $5$. None are troplanar.}
    \label{figure:non_troplanar_graphs}
\end{figure}

\begin{definition}
    A connected, trivalent graph $G$ is called \emph{sprawling} if there exists a vertex $s$ such that $G \setminus \{ s\}$ consists of three distinct components.
\end{definition}

In this language, \cite[Proposition 4.1]{Cartwright_2015} implies that sprawling graphs are not tropically planar.  They point in particular to the lollipop graph of genus $3$, the middle graph of Figure \ref{figure:non_troplanar_graphs}, as not appearing as the skeleton of a smooth plane tropical curve \cite[Corollary 4.2]{Cartwright_2015}.

Another criterion was introduced in \cite{morrison2021tropicalhyperellipticcurves} towards determining which hyperelliptic metric graphs can arise as the skeletons of smooth plane tropical curves.

\begin{definition}
    A planar embedding of a connected, trivalent graph $G$ is called \emph{crowded} if either: there exists two bounded faces sharing at least two edges; or, there exists a bounded face sharing an edge with itself. If all planar embeddings of such a graph are crowded, we say $G$ is crowded.
\end{definition}
For instance, the graph on the right in Figure \ref{figure:non_troplanar_graphs} can be shown to be crowded.  If a graph is crowded then it is non-troplanar \cite[Lemma 3.5]{morrison2021tropicalhyperellipticcurves}. The proof is nearly immediate:  to have two bounded faces share two edges in a tropical curve, their corresponding interior lattice points would have to be connected by two edges in the triangulation, which is impossible.

Beyond the criteria of non-planarity, sprawling, and crowdedness, additional obstructions to troplanarity have been identified. For instance, of the 34 non-troplanar graphs of genus $5$, seven are crowded, 4 are non-planar, 4 have \emph{heavy cycles} \cite{Joswig2021}, 1 is a \emph{TIE-fighter graph} \cite{coles2020tropicallyplanargraphs}, and the other 17 are sprawling.

Following \cite{Cartwright_2015}, we now consider regular subdivisions of the Newton polygon that are allowed to have unit parallelograms, in addition to triangles of area $1/2$.  We refer to these as \emph{nodal subdivisions}, and the corresponding tropical curves as \emph{nodal tropical curves}.  A nodal  tropical curve  will have a $4$-valent node for each parallelogram in its dual subdivision. We interpret such a tropical curve as an \emph{immersion}, rather than an embedding, of a polyhedral complex, with the $4$-valent nodes not being part of the structure.  We can still define the skeleton of the curve as being a minimal subset that admits a deformation retract.

We remark here that the abstract polyhedral complex associated with a nodal tropical curve (and the skeleton thereof) may be disconnected.  As a simple example, the tropical polynomial $(x\odot y)\oplus x\oplus y\oplus 0$ has the unit square as a Newton polygon, and the associated subdivision is the trivial one.  The dual tropical curve is the union of two lines, namely the $x$-axis and the $y$-axis; interpreting these lines as disjoint, the skeleton is thus the union of two points.  A more complex example is illustrated in Figure \ref{fig:disconnected}, where the skeleton of the nodal tropical curve consists of two cycles.

\begin{figure}[hbt]
    \centering
    \includegraphics[width=0.5\linewidth]{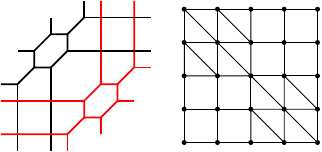}
    \caption{On the left, a nodal tropical curve whose skeleton is disconnected; on the right, its dual nodal subdivision.}
    \label{fig:disconnected}
\end{figure}


In this paper we restrict attention to nodal tropical curves with connected skeleta.  Extending to disconnected skeleta would be an interesting direction for future work.

\begin{definition}
    Let $G$ be a connected trivalent graph. The \emph{tropical crossing} number of $G$, $\text{TCN}(G)$, is the minimum number of nodes in any immersion of any abstract tropical curve whose skeleton is $G$.
\end{definition}


The fact that this number is well-defined follows from the following result. 


\begin{theorem}[\cite{Cartwright_2015}]
    Let $\Gamma$ be an abstract tropical curve whose vertices have degree at most 3. Then $\Gamma$ has an immersion in $\mathbb{R}^2$ as a tropical curve.
\end{theorem}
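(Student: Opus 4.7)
The plan is to prove this by induction on the first Betti number $g$ of $\Gamma$, building the immersion explicitly.

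For the base case $g = 0$, the curve $\Gamma$ is a trivalent tree whose $1$-valent vertices are reached by infinite rays. I would produce an immersion by a recursive construction: root the tree at one of its rays and propagate inward. At each trivalent vertex encountered, one of the incident directions has already been fixed by the propagation, and the remaining two primitive integer direction vectors may be chosen freely subject only to the balance condition that the three directions sum to zero; there are infinitely many such pairs. The physical position of each vertex is then determined by the prescribed length of the edge along which we arrived. With generic choices no coincidences occur, so the resulting tropical curve has $\Gamma$ as its skeleton with zero nodes.

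For the inductive step, assume the result for abstract tropical curves of first Betti number less than $g$. Given such a $\Gamma$ of genus $g$, choose an edge $e$ contained in some cycle, delete it, and attach a new infinite ray at each of its two former endpoints so that all vertices remain trivalent. The resulting curve $\Gamma'$ has genus $g-1$ and, by the inductive hypothesis, admits an immersion in $\mathbb{R}^2$. To extend this to an immersion of $\Gamma$, I would reintroduce $e$ as a line segment between its endpoints $p_u, p_v$; the direction of $e$ is forced by balance at each endpoint (two of the three incident directions being already fixed), and the length $\ell_e$ is prescribed. Generically this segment will cross other edges of the immersion of $\Gamma'$, and every such crossing becomes a node of the resulting immersion of $\Gamma$.

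The main obstacle is the cycle-closing condition: the forced direction $d_e$ and length $\ell_e$ for the reintroduced edge $e$ may not be consistent with the positions $p_u$ and $p_v$ produced by the inductive construction. To overcome this, the plan is to exploit the freedom built into the inductive step. Specifically, one may adjust the primitive integer vectors chosen during the immersion of $\Gamma'$ (among the infinitely many balance-compatible triples at each trivalent vertex) so that the eventual positions satisfy $p_v - p_u = \ell_e \cdot d_e$. Establishing that this discrete matching problem is always solvable is the delicate part of the argument; once it is settled, iterating the procedure for each of the $g$ non-tree edges yields the desired immersion, with finitely many nodes equal to the pairwise crossings among the resulting straight segments in $\mathbb{R}^2$.
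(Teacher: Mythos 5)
This statement is quoted by the paper from Cartwright--Dudzik--Manjunath--Yao and is not proved anywhere in the text, so there is no internal argument to compare against; your proposal has to stand on its own, and as written it does not. The entire difficulty of the theorem is concentrated in the step you explicitly defer: showing that the positions $p_u$, $p_v$ produced by the inductive immersion of $\Gamma'$ can be arranged to satisfy $p_v-p_u=\ell_e\cdot d_e$. This is not a routine genericity statement. The direction $d_e$ is forced twice over --- balance at $u$ determines one primitive integer vector, balance at $v$ determines another, and these must be opposite --- and in addition the displacement $p_v-p_u$ must equal exactly $\ell_e$ times that vector, where $\ell_e$ is an arbitrary prescribed positive real. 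The parameters you propose to adjust in order to hit this target are the \emph{discrete} choices of balanced triples of primitive integer vectors at the vertices of $\Gamma'$, while the target varies continuously with the metric on $\Gamma$; moreover, when you iterate over all $g$ non-tree edges the closing conditions interact, so they cannot simply be solved one at a time as the induction suggests. Until this matching problem is actually solved, the proposal is a plan for a proof rather than a proof, and it is precisely the plan whose execution constitutes the content of the cited theorem.

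Two further points need attention even if the closing condition were settled. First, a transverse crossing of two edges with primitive directions $u$ and $v$ is dual to a parallelogram of area $|\det(u,v)|$; the nodal subdivisions used here require \emph{unit} parallelograms, so you must either ensure $|\det(u,v)|=1$ at every crossing or explain how a higher-area crossing is resolved into admissible nodes. You should also rule out three edges meeting at a point and non-transverse overlaps; perturbing the real edge lengths helps, but the directions are rigid. Second, the statement allows vertices of degree $1$ and $2$, not only trivalent ones; this is easy to accommodate, but your construction as stated treats only the trivalent case.
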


Based on this result, \cite{Cartwright_2015} introduced the tropical crossing number of an abstract tropical curve $\Gamma$ (that is, a metric graph with rays allowed) with maximum degree at most $3$ as the minimum number of nodes required in an immersion of $\Gamma$ as a tropical curve.  Since our definition ignores the metric on $G$, we have
\[\text{TCN}(G)=\min_\Gamma \text{TCN}(\Gamma),\]
where the minimum is taken over all abstract tropical curves $\Gamma$ with $G$ as the underlying finite skeleton.

Note that a graph $G$ is troplanar if and only if $\text{TCN}(G)=0$; thus all non-planar, sprawling, and crowded graphs have positive tropical crossing number.  However, the literature to date does not imply the precise value of $\text{TCN}(G)$ for any non-troplanar graph $G$. Although \cite{Cartwright_2015} showed that the theta graph $\Gamma$ with all edge lengths equal satisfies $\text{TCN}(\Gamma)=1$, the underlying simple graph $G$ has $\text{TCN}(G)=0$, since any other choice of edge lengths allows the theta graph to appear as the skeleton of a smooth tropical plane curve.

In the next section we compute exactly the tropical crossing number of several non-troplanar graphs, and then leverage these to construct a graph with any prescribed tropical crossing number (as well as any prescribed graph-theoretic crossing number at most the tropical crossing number).

\section{Achievability of tropical crossing numbers}\label{section:three}

Here we build up to proving our main results showing the achievability of any tropical crossing number. We begin by proving two graphs have tropical crossing number equal to $1$.

The left graph in Figure~\ref{fig:crowded_g5} is a crowded graph of genus 5 by \cite[Corollary 3.7]{morrison2021tropicalhyperellipticcurves}. Since no crowded graph is troplanar, its tropical crossing number is at least $1$. The accompanying nodal subdivision with its dual nodal tropical curve in Figure~\ref{fig:crowded_g5} are sufficient to show the crowded graphs tropical crossing number is equal to $1$.

\begin{figure}[hbt]
    \centering
    \includegraphics[width=0.75\linewidth]{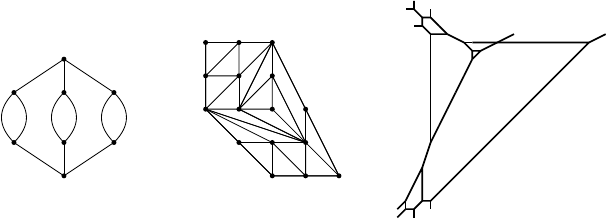}
    \caption{A crowded graph of genus 5 next to a nodal subdivision with a dual nodal tropical curve that skeletonizes to the crowded graph on the left.}
    \label{fig:crowded_g5}
\end{figure}

Now consider the complete bipartite graph $K_{3,3}$, the smallest non-planar cubic graph. Since the crossing number of $K_{3,3}$ is $1$ we know that the tropical crossing number of $K_{3,3}$ is at least $1$. Therefore, in order to definitively determine the tropical crossing number of $K_{3,3}$ we must find nodal tropical curve which skeletonizes to $K_{3,3}$ with a single node; equivalently, find a nodal subdivision of a lattice polygon whose dual tropical curve skeletonizes to $K_{3,3}$. See Figure~\ref{fig:k33_nonplanar} for a drawing of $K_{3,3}$ next to a nodal subdivision and its dual nodal tropical curve with a single node, which skeletonizes to $K_{3,3}$. Thus we now know that $\text{TCN}(K_{3,3})=1$. 

\begin{figure}[hbt]
    \centering
    \includegraphics[width=0.75\linewidth]{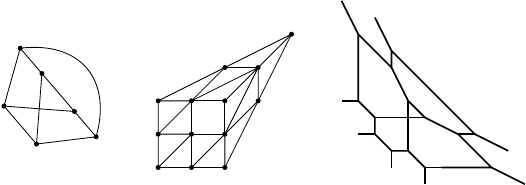}
    \caption{$K_{3,3}$ next to a nodal subdivision with a dual nodal tropical curve that skeletonizes to $K_{3,3}$.}
    \label{fig:k33_nonplanar}
\end{figure}

Before continuing we will provide a short road map which we followed to discover the nodal subdivision giving rise to the tropical curve of $K_{3,3}$. To begin, before we know the exact nodal subdivision for $K_{3,3}$, suppose that there existed a nodal subdivision of $K_{3,3}$ with a single parallelogram. If such a subdivision existed then one can simply adjust the height of a single vertex of the parallelogram, refining it to induce a triangulation. This would in turn change the dual tropical curve and its skeleton. By the duality of tropical curves and subdivision, such a refinement will add two new vertices with an edge between them changing the skeleton from $K_{3,3}$ to a graph next to $K_{3,3}$ in Figure \ref{fig:refined_k33}. This has changed the problem to determine a nodal subdivision for a non-planar graph, which can be difficult, to determining a triangulation for a planar graph, which is simple. Now, if the refined version of $K_{3,3}$ is troplanar we can find the triangulation whose dual tropical curve skeletonizes to it and then we find the edge which is dual to the added edge between the two additional vertices. If this edge can be removed to form a parallelogram in the subdivision then we are done. What we will find is that by removing that edge we have found a nodal subdivision whose tropical curve skeletonizes to $K_{3,3}$. Although this process worked in this case, it is difficult to implement in general, in part because the triangles dual to the two additional vertices may not form a convex quadrilateral.

\begin{figure}[hbt]
    \centering
    \includegraphics[width=0.40\linewidth]{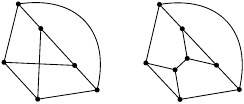}
    \caption{$K_{3,3}$ as well as a refined version of $K_{3,3}$.}
    \label{fig:refined_k33}
\end{figure}

Between the genus $5$ crowded graph and $K_{3,3}$, we now have a graph with crossing number $0$ and tropical crossing number $1$, and a graph with crossing number $1$ and tropical crossing number $1$.  As a next step, we might ask:  can we find a graph with crossing number $0$ and tropical crossing number $2$?  Or, a graph with crossing number $1$ and tropical crossing number $2$?  A strategy to try might be to combine two of our existing graphs with a bridge.  For instance, two copies of the crowded graph connected by a bridge is planar, so has crossing number $0$; and we might expect it has tropical crossing number $2$, since there are two copies of a crowded graph that must be resolved.  Similarly, connecting $K_{3,3}$ with the crowded graph might give a graph with crossing number $1$ and tropical crossing number $2$.  This strategy could be continued, connecting more copies of graphs with bridges. This is illustrated diagramatically in Figure \ref{cartoon_stiching}.

    \begin{figure}[hbt]
    \centering
    \includegraphics[width=8cm]{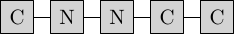}
    \caption{Cartoon stitching of crowded graphs (C) and non-planar graphs (N).}
    \label{cartoon_stiching}
    \end{figure}

There are two primary challenges in this strategy.  First, how can we actually build a tropical curve with the desired skeleton?  The polygons from Figures \ref{fig:crowded_g5} and \ref{fig:k33_nonplanar} cannot be naturally combined to achieve this.  And second, how do we know for sure that we might not find some savings in tropical crossing number by having some nodes simultaneously ``resolve'' the issues of the building block graphs?

To address the first issue, we found a polygon $P$ that (i) could give rise to a crowded and to a non-planar graph, each with tropical crossing number $1$, and (ii) could be stitched to itself any number of times.  In particular, we will use the parallelogram illustrated in Figure \ref{figure:nodal_subdivisions_np_and_cr} with two nodal subdivisions $\Delta_{np}$ and $\Delta_{cr}$.  

 \begin{figure}[hbt]
        \centering
        \includegraphics[width=0.5\linewidth]{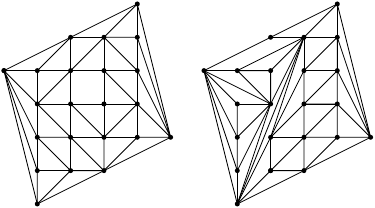}
        \caption{The nodal subdvisions $\Delta_{np}$ and $\Delta_{cr}$}
        \label{figure:nodal_subdivisions_np_and_cr}
    \end{figure}

Tropical curves dual to these two subdivisions are illustrated in Figure \ref{figure:trop_curves_np_and_cr}, ensuring that these subdivisions are indeed regular.  The skeletons of these curves are illustrated in Figure \ref{figure:g_np_and_g_cr}.  The skeleton $G_{np}$ arising from $\Delta_{np}$ is non-planar, as demonstrated by the highlighted subdivision of $K_{3,3}$ that appears as a subgraph.  The skeleton $G_{cr}$ arising from $\Delta_{cr}$ is crowded, as can be deduced from \cite[Corollary 3.7]{morrison2021tropicalhyperellipticcurves}.  Note that $\text{CN}(G_{np})=1$, that $\text{CN}(G_{cr})=0$, and that $\text{TCN}(G_{np})=\text{TCN}(G_{cr})=1$.

\begin{figure}[hbt]
    \centering \includegraphics[scale=0.7]{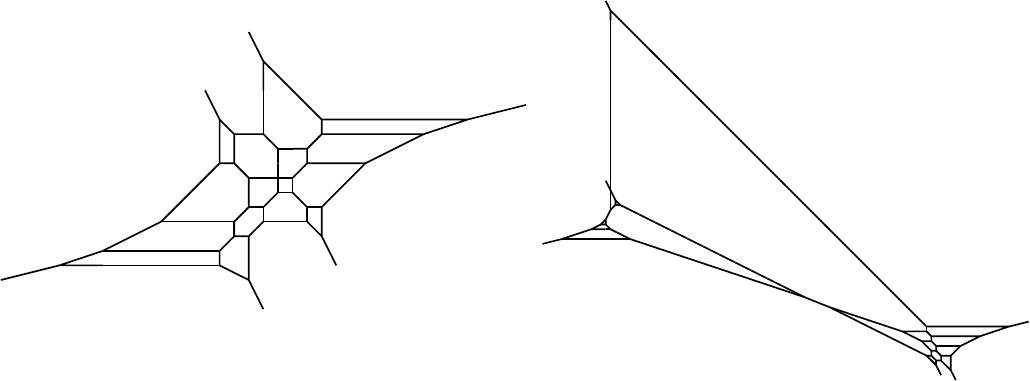}
    \caption{Nodal tropical curves dual to the subdivisions $\Delta_{np}$ and $\Delta_{cr}$}
    \label{figure:trop_curves_np_and_cr}
\end{figure}

\begin{figure}[hbt]
    \centering
    \includegraphics[scale=2]{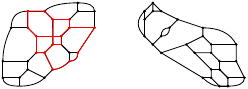}
    \caption{The graphs $G_{np}$ and $G_{cr}$. A subdivided copy of $K_{3,3}$ is highlighted in $G_{np}$.}
    \label{figure:g_np_and_g_cr}
\end{figure}

To start connecting copies of these graphs with bridges, we merge copies of the polygon with the appropriate subdivisions in each piece, as illustrated in Figure \ref{figure:polygons_stitched}. The next proposition ensures these larger subdivisions are indeed regular.

\begin{figure}[hbt]
    \centering
\includegraphics[scale=0.9]{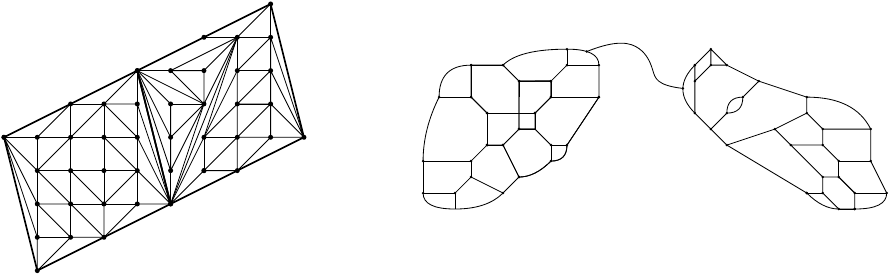}
    \caption{The subdivisions $\Delta_{np}$ and $\Delta_{cr}$ stitched together, yielding a skeleton consisting of $G_{np}$ and $G_{cr}$ joined by a bridge.}
    \label{figure:polygons_stitched}
\end{figure}

\begin{proposition}\label{prop:patching}
    Let $P_1$ and $P_2$ be Newton polygons and $\Delta_1$ and $\Delta_2$ be regular subdivisions of $P_1$ and $P_2$ respectively.  Suppose $P_1$ and $P_2$ can be glued along an edge of lattice length $1$ to form a new convex polygon $P_3$. Then, the subdivision $\Delta_3 = \Delta_1 \cup \Delta_2$ of $P_3$ is regular. 
\end{proposition}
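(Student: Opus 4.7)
The plan is to construct a height function $h_3$ on $P_3 \cap \mathbb{Z}^2$ that induces $\Delta_3$ as its regular subdivision, by patching together height functions for $\Delta_1$ and $\Delta_2$. Let $A$ and $B$ denote the two lattice points of the shared edge $e$ (there are exactly two, since $e$ has lattice length $1$), and let $h_1$ and $h_2$ be height functions witnessing the regularity of $\Delta_1$ and $\Delta_2$. Since adding an affine function to a height function does not change the induced subdivision, and the space of affine functions on $\mathbb{R}^2$ is three-dimensional while agreement at the two points $A, B$ imposes only two constraints, I would first modify $h_2$ by a suitable affine function so that $h_1 = h_2$ on $\{A, B\}$.

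Next, let $L$ be the line through $e$, and let $\ell_L$ be an affine function vanishing on $L$ and strictly positive on the half-plane containing $P_2 \setminus L$; by convexity of $P_3$, the set $P_1 \setminus L$ lies in the opposite half-plane where $\ell_L < 0$. For a parameter $c > 0$ I would replace $h_2$ by $h_2 - c\ell_L$: this still induces $\Delta_2$ (an affine modification) and still agrees with $h_1$ on $\{A, B\}$. Define
\[
h_3(p) = \begin{cases} h_1(p), & p \in P_1 \cap \mathbb{Z}^2, \\ h_2(p) - c\,\ell_L(p), & p \in P_2 \cap \mathbb{Z}^2, \end{cases}
\]
which is consistent on $A$ and $B$. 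To verify that $h_3$ induces $\Delta_3$ for $c$ sufficiently large, I would produce an affine certifier for every face of $\Delta_3$. For a face $F$ of $\Delta_1$, take its $\Delta_1$-certifier $\phi_F$ (which equals $h_1$ on the lattice points of $F$ and strictly exceeds $h_1$ on the other lattice points of $P_1$); since $h_3 = h_2 - c\ell_L \to -\infty$ on lattice points of $P_2 \setminus L$ as $c \to \infty$, the same $\phi_F$ also dominates $h_3$ there for $c$ large. Symmetrically, if $\psi_F$ is a $\Delta_2$-certifier for a face $F$ of $\Delta_2$, then $\psi_F - c\ell_L$ is an affine function that coincides with $h_3$ on $F$, inherits the $\Delta_2$-certification on $P_2 \cap \mathbb{Z}^2$, and strictly exceeds $h_1$ on $P_1 \cap \mathbb{Z}^2 \setminus \{A, B\}$ for $c$ large (because $-c\ell_L \to +\infty$ on that side). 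Only finitely many faces are involved, so a single $c$ works for all of them.

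The main obstacle is guaranteeing that the shared edge $e$ genuinely survives as an edge of $\Delta_3$, rather than being absorbed into a larger face formed by merging a face of $\Delta_1$ across $e$ with a face of $\Delta_2$. This is precisely what the tilt by $-c\ell_L$ accomplishes: in the lifted picture over $\mathbb{R}^3$, it creates an honest fold along the line $L$, so that the supporting planes of the two faces flanking $e$ never coincide. Beyond this, the argument is purely bookkeeping: choose $c$ larger than the maximum of the finitely many thresholds needed to make each certifier-versus-height inequality strict.
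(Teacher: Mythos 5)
Your proof is correct and follows essentially the same strategy as the paper's: adjust the two height functions by affine maps so they agree on the shared unit edge, then tilt one side to create a fold along that edge and glue. The paper normalizes both lifts so the cells adjacent to the shared edge sit at height $0$ and then shears, whereas you keep $h_1$ fixed and run an explicit large-$c$ certifier argument; your version just makes rigorous the step the paper summarizes as ``the two halves of the subdivision operate independently.''
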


Note that a weaker version of this proposition was proven in \cite{kaibel2002countinglatticetriangulations} where they restricted $\Delta_1$ and $\Delta_2$ to be regular triangulations.

\begin{proof}
    Let $p,q$ be the endpoints of the lattice length $1$ edge that will be shared by $P_1$ and $P_2$.  By \cite[Exercise 2.1]{loerarambausantos2016triangulations}, for any subdivision one can apply a change of coordinates to place any one of the subpolygons vertices all at height $0$. Follow this process for both $\Delta_1$ and $\Delta_2$, where the change of coordinates places the subpolygon adjacent to the lattice length 1 edge which will connect the two subdivisions at height $0$.  We may further perform a shearing transformation on the height function to keep $p$ and $q$ at height $0$, while lifting every other height to a positive number.  Now consider gluing these height functions for $P_3$; we claim this induces $\Delta_3 = \Delta_1 \cup \Delta_2$.  Since the shared vertices of $p$ and $q$ are at height $0$ and all other heights are positive, the desired edge $pq$ is induced, and from there the two halves of the subdivision operate independently.  Thus $\Delta_3$ is a regular triangulation
\end{proof}

We need one more tool before the proof of our main theorem, namely a result to control how many nodes arise in a tropical plane curve when two of the $2$-connected components of the skeleton intersect one another.  (The $2$-connected  components of the skeleton are the subgraphs obtained by deleting all bridges.)

\begin{proposition}\label{proposition:one_int_means_two_int}
    Let $G$ be a $3$-regular graph with $2$-connected components $G_1,\ldots,G_s$, and let $\Gamma$ be a nodal tropical curve with skeleton $G$.  If $G_i$ and $G_j$ intersect in a node in $\Gamma$, then they intersect in at least one other node.
\end{proposition}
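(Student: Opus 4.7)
The plan is to prove this by a mod-$2$ intersection argument: two null-homotopic closed curves meeting transversally in the plane must cross an even number of times.

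First, because the $2$-connected components $G_1,\ldots,G_s$ are obtained by deleting all bridges of $G$, the subgraphs $G_i$ and $G_j$ are vertex-disjoint. Suppose the hypothesized node of $\Gamma$ is the transversal crossing of an edge $e_i \in E(G_i)$ with an edge $e_j \in E(G_j)$. Since $G_i$ is bridgeless, every edge of $G_i$ lies on a simple cycle; pick such a cycle $C_i \subseteq G_i$ containing $e_i$, and similarly $C_j \subseteq G_j$ containing $e_j$.

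Next, view the images of $C_i$ and $C_j$ under the immersion $\Gamma \to \mathbb{R}^2$ as closed piecewise-linear curves. The edges of a nodal tropical plane curve are line segments meeting only at shared endpoints of the polyhedral complex or at the parallelogram nodes, and the latter are transversal crossings by construction. Because $G_i$ and $G_j$ share no vertex, no segment of $C_i$ shares an endpoint with a segment of $C_j$, so every point common to the two immersed curves is a transversal node at which an edge of $G_i$ meets an edge of $G_j$. Since $\mathbb{R}^2$ is simply connected, $C_i$ and $C_j$ are null-homotopic loops, and so their mod-$2$ intersection number vanishes; combined with transversality, the total number of their crossings is even. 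As the hypothesized node provides one such crossing, there must be at least one more, which is another node between $G_i$ and $G_j$.

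The main point I expect to need to verify carefully is that every set-theoretic meeting of the immersed cycles $C_i$ and $C_j$ is an honest transversal node rather than a tangency, an overlap of parallel edges, or a shared vertex of the polyhedral complex. This follows from the vertex-disjointness of $G_i$ and $G_j$ together with the rigid piecewise-linear structure of tropical plane curves, whose only crossings come from the unit parallelograms in the dual subdivision and are therefore transversal with distinct slopes. Once this is in place, the $\mathbb{Z}/2$-parity step is immediate.
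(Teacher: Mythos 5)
Your proof is correct and follows essentially the same route as the paper's: both select a closed curve in $G_i$ through the crossed edge and one in $G_j$ through the other, and conclude from a parity argument for transversal intersections of closed curves in the plane (the paper uses boundaries of bounded faces and a Jordan-curve count; you use arbitrary cycles and mod-$2$ homotopy invariance, which is the same topological fact). The one point the paper makes explicitly that you handle only implicitly is that two of the counted crossings cannot occur at the same point, which follows because every node is $4$-valent.
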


\begin{proof}
    Let $p$ be the nodal intersection between $G_i$ and $G_j$, say on the edge $e$ of $G_i$ and the edge $e'$ of $G_j$.  Since $p$ cannot be a nodal intersection of $G_i$ with itself, and since $G_i$ is $2$-connected, $e$ is part of at least one bounded face $f$ of the graph-theoretic immersion of $G_i$ induced by $\Gamma$.  Similarly, $e'$ is part of at least one bounded face $f'$ of the graph-theoretic immersion of $G_j$ induced by $\Gamma$.  Since the boundaries of $f$ and $f'$ each form a simple closed curve and they intersect transversely at one point, they must intersect transversely at at least two points, or possibly at the same point a second time; but since $\Gamma$ has every node $4$-valent, a second intersection at the same point is impossible.  Thus $G_i$ and $G_j$ intersect in at least two nodes.
\end{proof}

For an example of this phenomenon we refer to Figure \ref{figure:bridge_nodal}.  In the center is a nodal tropical curve, whose skeleton (on the right) consists of two cycles connected by a bridge. Since the two cycles intersect in one node, they must intersect in a second node.

\begin{figure}[hbt]
    \centering
    \includegraphics[scale=0.8]{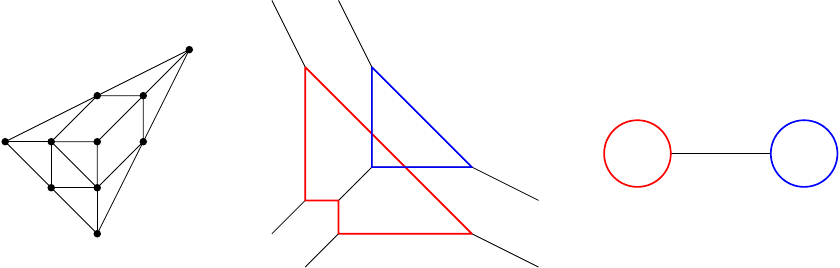}
    \caption{A nodal subdivision giving rise to a nodal tropical curve with genus $2$ skeleton.  Since the $2$-connected portions of the skeleton intersect in one node, they must intersect in a second.}
    \label{figure:bridge_nodal}
\end{figure}

With these two propositions we can finally prove our main theorem.

\begin{proof}[Proof of Theorem \ref{thm:main}]
    Given $0 \leq d \leq k$ we construct $G$ such that $\text{CN}(G) = d$ and $\text{TCN}(G) = k$.  If $k=0$, we can use any tropically planar graph.  
    
Now assume $k\geq 1$.  Build a polygon $P$, and a nodal subdivision $\Delta$ thereof, by patching together $d$ copies of the regular subdivision $\Delta_{np}$ and $k-d$ copies of the regular subdivision $\Delta_{cr}$ from Figure \ref{figure:nodal_subdivisions_np_and_cr}.  By Proposition \ref{prop:patching}, $\Delta$ is a regular subdivision of $P$.  Let $G$ be the skeleton of a dual tropical curve.  Note that $G$ consists of $d$ copies of $G_{np}$ and $k-d$ copies of $G_{cr}$, connected in a path using bridges.

The crossing number of a graph is the sum of the crossing numbers of its $2$-connected components.  Since $\text{CN}(G_{np})=1$ and $\text{CN}(G_{cr})=0$, we have $\text{CN}(G)=d$.

Since $\Delta$ has $d+(k-d)=k$ unit parallelograms, $\text{TCN}(G)\leq k$.  To complete the proof we must argue that $\text{TCN}(G)\geq k$.  Let $G_1,\ldots,G_k$ denote the $2$-connected components of $G$, each of which is either $G_{np}$ or $G_{cr}$.

If $G_i$ intersects with another $G_j$, by Proposition~\ref{proposition:one_int_means_two_int} we have that $G_i$ and $G_j$ intersect in at least two nodes.  Thus $G_i$ contributes at least two nodal intersections; to avoid double-counting, we ascribe one nodal intersection to $G_i$.
    If $G_i$ does not intersect with any other $G_j$, there must be a nodal intersection involving $G_i$ and no either $G_j$ to resolve its crowdedness or non-planarity; this intersection is either a self-intersection, or an intersection with a an edge of the tropical curve that is not part of any $G_j$ . In all cases, $G_i$ contributes one tropical crossing, leading to at least as many tropical crossing as there are 2-connected components. Thus $\text{TCN}(G) \geq k$, completing the proof.
\end{proof}

\section{Computational results and the lollipop graph}\label{section:four}

The smallest 3-regular non-troplanar graph is the lollipop graph \cite{Cartwright_2015}, illustrated in Figure \ref{fig:lollipop}. In this section we use a computational approach to determine the tropical crossing number of the lollipop graph.

\begin{figure}[hbt]
    \centering
    \includegraphics[width=0.15\linewidth]{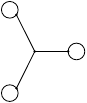}
    \caption{The lollipop graph}
    \label{fig:lollipop}
\end{figure}

One can approach the question of determining whether or not a graph is troplanar from a computational perspective; \cite{Brodsky_2015} implemented such an approach up through genus $5$.  We recall in detail their methodology, as we will be closely adapting it.

Suppose we wish to find all troplanar graphs of genus $g$.  Any such graph must arise from a regular, unimodular triangulation of a lattice polygon of genus $g$.  There are infinitely many such lattice polygons, although for fixed $g\geq 1$, there are only finitely up to lattice transformation \cite{Castryck2012MovingOT} (that is, up to the action of $GL_2(\mathbb{Z})$ plus translation).  Although this turns the problem finite, there are still many polygons even for small $g$. To make the problem more tractible, we restrict our attention to \emph{maximal polygons}, those lattice polygons of genus $g$ that are not contained in any other lattice polygon of genus $g$; this suffices for our purposes, since if $Q\subset P$ with $Q$ and $P$ both lattice polygons of genus $g$, any troplanar grah arising from $Q$ arrives from $P$ as well \cite[Lemma 2.6]{Brodsky_2015}.

Now that we have finitely many maximal lattice polygons (up to equivalence) with $g$ interior lattice points, it is natural to sort them into two classes:  \emph{hyperelliptic} polygons, with all interior points collinear; and \emph{non-hyperelliptic} polygons, whose interior points have a two-dimensional convex hull.  The hyperelliptic case is well understood:  the graphs arising from such polygons are precisely the chains, consisting of $g$ cycles in a line with each pair of adjacent cycles either sharing an edge or being joined by a bridge \cite{morrison2021tropicalhyperellipticcurves}.  Thus the main focus is the maximal non-hyperelliptic polygons.  For $g=3$, $g=4$, and $g=5$ there are $1$, $3$, and $4$ of these, respectively; these are illustrated in Figure \ref{figure:polygons_g_345}.  For a detailed account of how to find such polygons, see \cite{Castryck2012MovingOT}.  Thus all that remains to do is to find all regular, unimodular triangulations of the maximal non-hyperelliptic polygons of genus $g$, and determine the corresponding dual graph\footnote{The framework in \cite{Brodsky_2015} accomplished much more than this:  beyond just finding the combinatorial type of graph, they computed all achievable edge lengths on the metric graph.  We omit the details here since we are focused on the combinatorial graph.}.  Finding all such triangulations can be accomplished by using a computational tool like \cite[TOPCOM]{Rambau:TOPCOM:2002}.

\begin{figure}[hbt]
    \centering
    \includegraphics[scale=0.7]{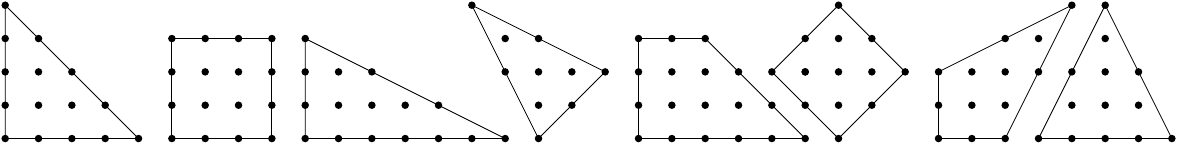}
    \caption{All maximal, non-hyperelliptic classes of lattice polygons of genus $3$, $4$, and $5$, up to equivalence}
    \label{figure:polygons_g_345}
\end{figure}


Now we adapt their approach to our setting. Note that by definition the tropical crossing number of a connected graph $G$ is the minimum number of unit parallelograms in any subdivision whose dual tropical curve skeletonizes to $G$. Moreover, we know that if $G$ has genus $g$ and arises from a nodal subdivision with $c$ nodes of a polygon with $g'$ interior lattice points, we have $g=g'-c$.  Thus if $G$ has genus $g$, to determine the tropical crossing number of $G$ one starts with all maximal Newton polygons of genus $g$ and finds all regular unimodular triangulations of these Newton polygons. If none of these subdivision give $G$ as their skeleton then attempt the same with genus $g+1$ non-hyperelliptic  Newton polygons, now with nodal subdivisions with one node allowed.  Continue in this process, increasing the genus of the polygons and the number of allowed nodes by $1$ each time. Since by \cite[Theorem 1.1]{Cartwright_2015} the tropical crossing number exists for all 3-connected graphs, this process must terminate. If the process terminates when considering polygons of genus $g+c$, then $\text{TCN}(G)=c$.

It would be convenient if in the above procedure we could limit our search space to non-hyperelliptic Newton polygons. To justify this we will show that nodal subdivisions of hyperelliptic polygons only give rise to chains. 

\begin{proposition}\label{prop:nodal_hyperelliptic_dual_graphs}
    Let $P$ be a genus $g$ hyperelliptic Newton polygon with $g\geq 3$, and let $G$ be the skeleton of the dual tropical curve to a nodal subdivision of $P$ with at most $g-2$ unit parallelograms. Then $G$ is a chain.
\end{proposition}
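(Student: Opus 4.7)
The plan is to induct on the number $c$ of unit parallelograms, reducing to the known chain structure theorem for unimodular triangulations of hyperelliptic polygons.

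First I would normalize $P$ via a lattice transformation so that its interior lattice points are $(1,1), (2,1), \ldots, (g,1)$ on the line $y = 1$; this is possible by the classification of hyperelliptic polygons. For the base case $c = 0$, the subdivision $\Delta$ is a unimodular triangulation, and its dual skeleton is a chain of genus $g$ by the hyperelliptic structure result from \cite{morrison2021tropicalhyperellipticcurves}.

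For the inductive step with $c \geq 1$, pick a parallelogram $Q$ of $\Delta$ and refine it by inserting a diagonal $d$. A small perturbation of the height function realizing $\Delta$ shows the refined subdivision $\Delta'$ is regular and has $c-1$ parallelograms. Since $c - 1 \leq g - 3 \leq g - 2$, the inductive hypothesis applies: the skeleton $G'$ of $\Delta'$ is a chain of genus $g - c + 1$ by \cite[Proposition 3.2]{Cartwright_2015}. The skeleton $G$ of $\Delta$ is obtained from $G'$ by deleting the edge $e_d$ dual to $d$ and smoothing the two resulting $2$-valent vertices; a standard Euler-characteristic computation confirms this decreases the first Betti number by exactly one.

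The heart of the argument is a trichotomy for the endpoints of $d$. If both endpoints are interior lattice points, a short area computation forces them to be consecutive, say $(i,1)$ and $(i+1,1)$: the diagonal is horizontal of direction $(\pm 1, 0)$, and the unit area condition $|b_1|\cdot|k| = 1$ on the edge vectors constrains the step $k$ to $\pm 1$. In this case $e_d$ is a shared edge of the adjacent cycles $C_i$ and $C_{i+1}$ of $G'$, and its removal merges these into a single cycle, shortening the chain by one link while preserving linearity. If exactly one endpoint of $d$ is interior, say $(i,1)$, then $e_d$ is on the boundary of the single cycle $C_i$ with the unbounded face on the other side; removing $e_d$ and smoothing eliminates $C_i$ from the chain, with the two ``outside'' edges incident to the former endpoints of $e_d$ splicing $C_{i-1}$ and $C_{i+1}$ together. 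Finally, if neither endpoint of $d$ is interior, then $e_d$ has the unbounded face on both sides, making it a bridge whose removal would disconnect the skeleton, contradicting our standing assumption that $G$ is connected.

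The main obstacle is the careful bookkeeping in the mixed case, where one must verify that after smoothing, the spliced edges correctly implement either a shared edge or a bridge between $C_{i-1}$ and $C_{i+1}$ in $G$, and handle boundary behavior when $i = 1$ or $i = g$. The tight combinatorial constraints on hyperelliptic polygons (only a few shapes up to lattice equivalence) and on unit parallelograms (only a few possible placements relative to the line $y=1$) make this case analysis tractable but tedious.
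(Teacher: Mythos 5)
Your overall strategy---reduce to the unimodular triangulation of the hyperelliptic polygon, where the skeleton is known to be a chain, and then track the effect of (re)introducing the parallelograms one at a time---is exactly the strategy of the paper's proof, so the architecture is sound. The genuine problem is your local model of what introducing a node does to the skeleton. You describe it as ``deleting the edge $e_d$ dual to $d$ and smoothing the two resulting $2$-valent vertices.'' That is not the right operation. Coarsening the two triangles of $Q$ into a parallelogram contracts $e_d$ to a $4$-valent node, and since the node is not part of the abstract structure of the immersed curve, the four surrounding edges are glued in \emph{opposite} pairs (the duals of parallel edges of $Q$ pass straight through the crossing), not in the adjacent pairs that ``delete and smooth'' would produce. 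In your first two cases the two operations happen to yield the same abstract graph, which is why your conclusions there are still correct, but the discrepancy is fatal in your third case.

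Concretely: when neither endpoint of $d$ is an interior lattice point, $e_d$ is dual to a split, so it is either a non-skeletal edge (in which case the node has no effect on the skeleton at all---a subcase you do not mention) or a bridge of the skeleton. You dismiss the bridge subcase as contradicting connectedness, but it does occur and does \emph{not} disconnect anything: the two sides of the bridge are rejoined through the crossing at the node, and the net effect is to merge the two bounded faces nearest the bridge into a single cycle, which again preserves chain-ness. (The paper's Figure of the genus-$2$ example built from two cycles joined by a bridge is precisely this phenomenon.) As written, your proof has no argument covering this case, and the case you claim is vacuous is not. A secondary, smaller issue is that your trichotomy on the endpoints of $d$ does not fully track which of the four edges surrounding the node are skeletal, which is needed to justify the ``splicing'' claim in your mixed case; the paper's proof handles this by an explicit five-way case analysis on skeletal versus non-skeletal local edges.
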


\begin{proof}
Let $G$ be a the skeleton of a tropical plane curve $C$ that is dual to a nodal subdivision of a hyperelliptic polygon $P$.  Complete this subdivision to a regular unimodular triangulation.  This gives a smooth plane tropical curve $C'$, whose skeleton $G'$ is a chain.

We now consider what effect reintroducing the nodes one at a time has on the skeleton of our tropical curve, as an edge is deleted and two pairs of edges are glued.  The effect will depend on which of the relevant edges are part of the skeleton and which are not; the five possibilities, up to symmetry, are illustrated in Figure \ref{figure:noding}, with skeletal edges solid and other edges dotted.  Note that once a skeletal edge is glued to a non-skeletal edge, it becomes non-skeletal. Furthermore, note that the fifth case does not impact the skeleton at all, and the second through fourth cases all have the effect of removing an edge or a portion of an edge from the skeleton (and removing any resulting leaves iteratively).

\begin{figure}[hbt]
    \centering
    \includegraphics{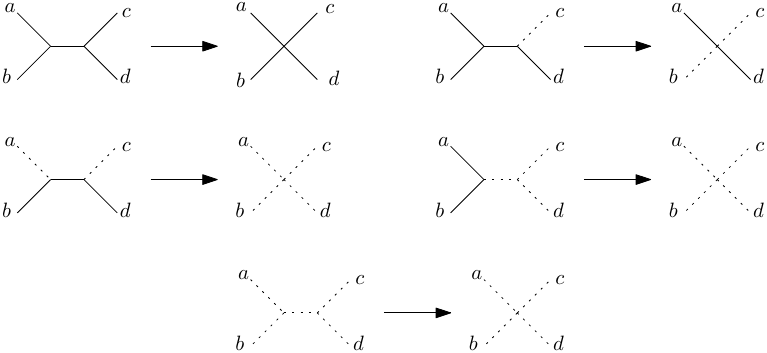}
    \caption{The five possible effects of introducing a node; the solid edges are part of the skeleton, and the dotted edges are not.}
    \label{figure:noding}
\end{figure}

We now argue that if we start with a chain, performing such a modification without disconnecting the graph yields another chain. Since we start with a chain $G'$, each node introduced will preserve the property of being a chain, implying that $G$ is a chain, as desired.  We may ignore the fifth case, as it does not affect the skeleton.  For the second through fourth case, it suffices to argue that deleting an edge or a portion of an edge from a chain preserves chain-ness.  We cannot delete a portion of a bridge, as this would disconnect the graph.  Thus a portion of the edge of a bounded face must be deleted.  If this edge is shared with another bounded face, those two bounded faces are merged, and we have a chain with genus one less.  If this edge is not shared with another bounded face, deleting a portion of it undoes that face, effectively removing it from the skeleton, still giving us a chain.

Now we consider the the first case.  We must consider three cases for the central edge that is removed:  it is either a shared edge between two bounded faces, an edge of a single bounded face, or a bridge.  These three cases are illustrated in Figure \ref{figure:twisted_chains}.  In each case, the property of being a chain is preserved.  For a shared edge, the two bounded faces are merged into one.  For an edge of a single bounded face, the net effect is the removal of that bounded face. For a bridge, the effect is merging the two bounded faces closest to the bridge into one face.

\begin{figure}[hbt]
    \centering
    \includegraphics[scale=0.7]{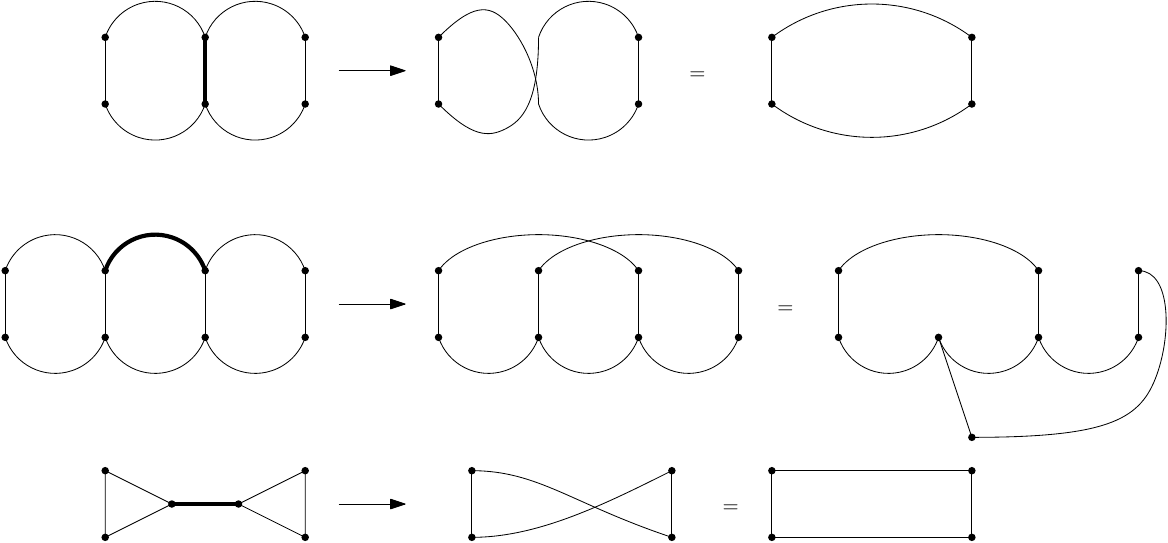}
    \caption{Possible impacts of introducing a node on the skeleton of a chain, when all local edges are part of the skeleton.}
    \label{figure:twisted_chains}
\end{figure}

Each node introduced preserves the property of the skeleton being a chain.  We conclude that $G$ must be a chain.

\end{proof}

In summary, given a connected graph $G$ of genus $g$ that is not a chain, to compute its $\text{TCN}(G)$ we considers all $c$-node subdivisions of all maximal non-hyperelliptic lattice polygons of genus $g+c$, starting with $c=0$ and increasing $c$ by $1$ each time we fail to find $G$ arising from any such subdivision.

We refer the reader to \url{https://github.com/noahcape/tcn} for all the code and examples of executing the above procedure.
We use this process to prove Theorem \ref{thm:lollipop}.

\begin{figure}[hbt]
    \centering
    \includegraphics[width=0.5\linewidth]{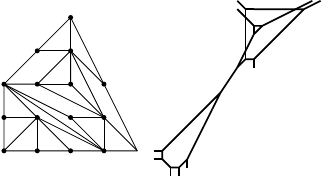}
    \caption{A nodal subdivision and a nodal tropical curve with three nodes, which skeletonizes to the lollipop graph.}
    \label{fig:tcn_lollipop}
\end{figure}

\begin{theorem}\label{thm:lollipop}
    The tropical crossing of the lollipop graph $G$ is 3.
\end{theorem}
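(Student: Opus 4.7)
The plan is to prove the theorem in two parts, matching an upper bound and a lower bound of $3$.

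For the upper bound $\text{TCN}(G) \leq 3$, I would simply point to Figure \ref{fig:tcn_lollipop}: it exhibits a regular nodal subdivision with exactly three unit parallelograms whose dual nodal tropical curve skeletonizes to the lollipop graph. (Regularity can be certified by displaying an explicit height function, or by producing the dual tropical curve, which is drawn in the same figure.) This immediately gives $\text{TCN}(G) \leq 3$.

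For the lower bound $\text{TCN}(G) \geq 3$, I would apply the computational framework described just before the theorem. Since the lollipop graph is sprawling and hence not a chain, Proposition \ref{prop:nodal_hyperelliptic_dual_graphs} restricts the search to non-hyperelliptic polygons. One then needs to rule out $c=0,1,2$ in turn. The case $c=0$ is immediate: sprawling graphs are non-troplanar by \cite[Proposition 4.1]{Cartwright_2015}, so the single maximal non-hyperelliptic polygon of genus $3$ (see Figure \ref{figure:polygons_g_345}) cannot yield the lollipop. The case $c=1$ requires enumerating all regular subdivisions of the three maximal non-hyperelliptic polygons of genus $4$ which consist of unit triangles together with exactly one unit parallelogram, computing the skeleton of each dual nodal tropical curve, and checking that none equals the lollipop graph. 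The case $c=2$ is analogous but uses the four maximal non-hyperelliptic polygons of genus $5$ and nodal subdivisions with exactly two unit parallelograms. Since $\text{TCN}(G) \leq 3 < \infty$, the procedure terminates at $c=3$, giving $\text{TCN}(G)=3$.

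The main obstacle is the genus $5$, two-node enumeration in step $c=2$: the number of regular subdivisions of the maximal genus-$5$ polygons is very large, and one must further filter out those whose non-triangular cells are exactly two unit parallelograms, compute the dual skeleton of each (deleting rays, pruning degree-$1$ vertices, and smoothing over degree-$2$ vertices), and test combinatorial isomorphism against the lollipop graph. I would implement this by calling TOPCOM \cite{Rambau:TOPCOM:2002} to enumerate all regular subdivisions, writing a filter that selects subdivisions whose cells are either unit triangles or unit parallelograms (with exactly $c$ of the latter), constructing the dual graph, skeletonizing, and comparing to the lollipop graph. The correctness of this pipeline and the completeness of the enumeration are the key points to verify; the code carrying this out is available at the GitHub repository linked above. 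Once this enumeration terminates without finding the lollipop graph at $c=0,1,2$, combined with the subdivision in Figure \ref{fig:tcn_lollipop} at $c=3$, the equality $\text{TCN}(G)=3$ follows.
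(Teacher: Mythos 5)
Your proposal matches the paper's proof essentially exactly: the upper bound comes from the explicit three-node subdivision in Figure \ref{fig:tcn_lollipop}, and the lower bound comes from the same computational enumeration ruling out $c=0$ (sprawling, hence non-troplanar), $c=1$ (genus-$4$ maximal non-hyperelliptic polygons with one parallelogram), and $c=2$ (genus-$5$ maximal non-hyperelliptic polygons with two parallelograms). In fact your write-up is slightly more careful than the paper's, which contains a small slip in stating ``one unit parallelogram'' for the genus-$5$ step where two are meant.
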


\begin{proof}
    Using the process described above we determined the tropical crossing number of the lollipop graph to be $3$. In particular, we know $\text{TCN}(G)\neq 0$ by \cite{Cartwright_2015}; we rule out $\text{TCN}(G)=1$ by enumerating all subdivisions of maximal genus $4$ polygons with one unit parallelogram allowed; and we rule out $\text{TCN}(G)=2$ by enumerating all subdivisions of maximal genus $5$ polygons with one unit parallelogram allowed. This gives us $\text{TCN}(G)\geq 3$. See Figure \ref{fig:tcn_lollipop} for a nodal subdivision with three unit parallelograms along with its dual nodal tropical curve with skeletonizes to the lollipop graph, giving us $\text{TCN}(G)\leq 3$ and completing the proof.
\end{proof}

Clearly this process is prohibitively expensive for graphs with large tropical crossing number. One possible direction for future research is to determine a faster computational approach to determine tropical crossing number. 

\section{A lower bound on tropical crossing number}\label{section:five}

In addition to introducing the tropical crossing number of metric graphs, \cite{Cartwright_2015} provides upper and lower bound on their tropical crossing number. Let $\Gamma$ be a trivalent abstract tropical curve and $G$ its underlying simple graph. Since $\text{TCN}(\Gamma)\geq \text{TCN}(G)$, any upper bound on $\text{TCN}(\Gamma)$ yields an upper bound  on $ \text{TCN}(G)$.
This allows us to immediately obtain the following.

\begin{proposition} If $G$ is a trivalent graph with genus $g$, the tropical crossing number of $G$ is at most $O(g^2)$. 
\end{proposition}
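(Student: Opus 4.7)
The plan is to exploit the direct relationship between the tropical crossing number of a finite graph and of an abstract tropical curve that refines it. Since $\text{TCN}(G) = \min_\Gamma \text{TCN}(\Gamma)$, in order to produce an upper bound on $\text{TCN}(G)$ it is enough to exhibit a single abstract tropical curve $\Gamma$ whose underlying graph is $G$ and for which $\text{TCN}(\Gamma) = O(g^2)$.

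First I would choose an arbitrary assignment of positive real lengths to the edges of $G$, along with infinite rays attached at the appropriate vertices to make $\Gamma$ a trivalent abstract tropical curve with $G$ as its finite skeleton. Since $G$ is trivalent, such a $\Gamma$ clearly exists. Next I would invoke the upper bound of \cite{Cartwright_2015} on the tropical crossing number of a trivalent abstract tropical curve of genus $g$, which is known to be $O(g^2)$. This gives $\text{TCN}(\Gamma) = O(g^2)$ for the $\Gamma$ we have built.

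Finally, applying the inequality $\text{TCN}(G) \leq \text{TCN}(\Gamma)$ that was recorded in the preceding paragraph of the paper, we conclude $\text{TCN}(G) = O(g^2)$, as claimed. There is essentially no obstacle here: the entire content of the proposition is the observation that the Cartwright--Dudzik--Manjunath--Ray--Sturmfels--Whitney-type quadratic bound on metric tropical crossing numbers passes to the finite graph setting via the minimum over metrics. The only thing to be careful about is ensuring the chosen $\Gamma$ is a legitimate trivalent abstract tropical curve (with appropriate rays added to make it a valid tropical object), which is immediate from the trivalence of $G$.
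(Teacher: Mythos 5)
Your proposal is correct and follows essentially the same route as the paper: both apply the quadratic upper bound of \cite[Theorem 1.2]{Cartwright_2015} to an arbitrary metric realization $\Gamma$ of $G$ and then pass to $\text{TCN}(G)$ via $\text{TCN}(G)\leq\text{TCN}(\Gamma)$. The only cosmetic difference is that the paper phrases the Cartwright et al.\ bound in terms of the number of edges $e=3g-3$ before converting to genus, whereas you invoke it directly in terms of $g$.
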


\begin{proof}
  By  \cite[Theorem 1.2]{Cartwright_2015}, any trivalent abstract tropical curve $\Gamma$ with $e$ edges has tropical crossing number at most $O(e^2)$.  Any trivalent graph of genus $g$ with $e$ edges has $e=3g-3$, so it also holds that $\Gamma$ has tropical crossing number at most $O(g^2)$.  This upper bound carries over to $\text{TCN}(G)$ as well.
\end{proof}

Another result \cite[Theorem 1.3]{Cartwright_2015} shows that there exist families of trivalent graphs whose tropical crossing number grows at least as fast as quadratic in $g$.  This result cannot be immediately used in our setting; indeed, their family of graphs (the oft-used chain of loops) is ill-suited for our purposes, since every such (non-metric) graph has $\textrm{TCN}(G)=0$.
We can still adapt some of their tools to show there exists a family of trivalent graphs with quadratic growth in tropical crossing number.  An important tool is the \emph{divisorial gonality} of a metric graph.  This is the minimum degree of a positive rank divisor on a metric graph; see \cite{baker_specialization} for more details.

\begin{proposition}
    There exists a family of graphs with tropical crossing number growing quadratically in the genus.
\end{proposition}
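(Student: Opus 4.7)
The plan is to transplant the gonality-based lower bound argument from \cite[Theorem 1.3]{Cartwright_2015} to the combinatorial setting by identifying a family of $3$-regular graphs whose \emph{every} metric realization has large divisorial gonality, rather than relying on a specific (generic) metric as in the chain-of-loops argument.

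First I would take as the family $\{G_g\}$ a sequence of $3$-regular graphs of genus $g$ with treewidth $\tw(G_g)=\Omega(g)$; explicit examples come from $3$-regular expander graphs, which have $|V|=2g-2$ vertices and treewidth linear in $|V|$.  By the theorem of van Dobben de Bruyn and Gijswijt that divisorial gonality is bounded below by treewidth (in the version that extends from combinatorial to metric graphs), every metric realization $\Gamma$ of $G_g$ satisfies $\gon(\Gamma)\geq\tw(G_g)=\Omega(g)$; in particular every abstract tropical curve whose underlying finite graph is $G_g$ has linear gonality.

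Next I would invoke the gonality-via-lattice-width bound for tropical plane curves. If $\Gamma$ is the connected skeleton of a nodal tropical plane curve with Newton polygon $P$, then for any primitive integer direction $v$ the perpendicular projection yields a positive-rank divisor on $\Gamma$ of degree $\text{lw}_v(P)$, so $\gon(\Gamma)\leq\text{lw}_v(P)$.  Minimizing over $v$ gives $\text{lw}(P)\geq\gon(\Gamma)=\Omega(g)$, and the Khinchine-style flatness inequality in the plane, $\text{lw}(P)^2\leq c\cdot\mathrm{area}(P)$, then forces $\mathrm{area}(P)=\Omega(g^2)$.  Via Pick's theorem this transfers to an $\Omega(g^2)$ lower bound on the number of interior lattice points of $P$; since this count equals $g+\text{TCN}(\Gamma)$, one obtains $\text{TCN}(G_g)=\Omega(g^2)$, as required.

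The main obstacle will be carefully verifying the two bridging results beyond the contexts in which they are most commonly stated: namely, (i) that $\gon(\Gamma)\geq\tw(G_g)$ holds for \emph{every} metric on $G_g$, and not merely for the combinatorial graph or for generic edge lengths; and (ii) that the gonality–lattice-width inequality survives in the nodal (rather than smooth) setting, where one must check that the projection from the immersed tropical plane curve restricts to a harmonic morphism of the expected degree $\text{lw}_v(P)$ on the connected skeleton despite the presence of unit parallelograms.  Once those two inputs are in hand, the chain of inequalities above is straightforward to assemble.
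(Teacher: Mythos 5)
Your proposal is correct and follows essentially the same route as the paper: take $3$-regular expanders so that treewidth (and hence, by van Dobben de Bruyn--Gijswijt, the divisorial gonality of \emph{every} metric realization) is linear in the genus, then convert linear gonality into a quadratic lower bound on the number of nodes. The only real difference is that the paper invokes \cite[Proposition 3.6]{Cartwright_2015}, namely $\text{TCN}(\Gamma)\geq\frac{3}{8}(\text{dgon}(\Gamma)-2)^2-g+\frac{1}{2}$, as a black box for that last step, which already takes care of the two delicate points you flag --- the validity of the lattice-width bound in the nodal setting, and the passage from area to interior lattice points (where your Pick's-theorem step would need the standard detour through the interior polygon to control boundary points).
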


\begin{proof}
    Given an abstract tropical curve $\Gamma$ with genus $g$ and divisorial gonality $\text{dgon}(\Gamma)$, by \cite[Proposition 3.6]{Cartwright_2015} if $\text{dgon}(\Gamma) > 2$ then we have the following lower bound on tropical crossing number of $\Gamma$:

    \[
    \text{TCN}(\Gamma) \geq \frac{3}{8}(\text{dgon}(\Gamma)-2)^2-g + \frac{1}{2}  
    \]

    Note that a 3-regular graph with $n$ vertices has genus equal to $\frac{1}{2}n + 1$. Since we define tropical crossing $\text{TCN}(G)$ as the minimum value of $\text{TCN}(\Gamma)$ over all metric graphs $\Gamma$ with underlying simple graph $G$, we want to convert the above statement to be in terms of $G$. Let $\text{tw}(H)$ denote the \emph{treewidth} of a graph $H$. By \cite[Theorem 1.1]{ALCO_2020__3_4_941_0} we have $\text{dgon}(\Gamma) \geq \text{tw}(G)$, where $G$ is the underlying simple graph of $\Gamma$. Thus we have

    \[
    \text{TCN}(\Gamma) \geq \frac{3}{8}(\text{tw}(G)-2)^2-g + \frac{1}{2}. 
    \]
    
    This lower bound on the tropical crossing number works for every choice of $\Gamma$ with $G$ as its underlying simple graph.  Thus in  fact we have

    \[
    \text{TCN}(G) \geq \frac{3}{8}(\text{tw}(G)-2)^2-g + \frac{1}{2}.  
    \]

    Finally, we note that expander graphs have treewidth linear in the number of vertices, and since genus is also linear in the number of vertices we are done.
\end{proof}

Due to the high connectivity of expander graphs they are not guaranteed to be planar, especially not those with larger genus. Thus, it remains unknown if there exists a family of \emph{planar} graphs with quadratic tropical crossing number in their genus. Since treewidth grows like $\sqrt{n}$ for planar graphs, a different approach would likely need to be considered to show that such a family of graphs exists.

One possible avenue is to adapt the above argument using \emph{stable divisorial gonality} $\text{sdgon}(G)$, the minimum gonality of any subdivision of $G$.  Using work in \cite{discrete_metric_different}, it can be shown that $\text{sdgon}(G)\leq \text{gon}(\Gamma)$ for any metric graph with underlying simple graph $G$, thus yielding the lower bound
\[
    \text{TCN}(G) \geq \frac{3}{8}(\text{sdgon}(G)-2)^2-g + \frac{1}{2}.  
    \]
Finding a family of trivalent planar graphs with stable divisorial gonality growing linearly (or even faster than square-root) in the number of vertices would be an interesting contribution in this direction.

\bibliographystyle{alpha}
\newcommand{\etalchar}[1]{$^{#1}$}

\clearpage
\newpage

\appendix
\section{Examples of nodal tropical curves}\label{section:six}

In this appendix we present several graphs of genus $5$ and $6$ with tropical crossing number equal to $1$.  Each figure shows a subdivision with precisely one node, implying tropical crossing number at most $1$ for each graph.  The genus $5$ graphs in Figures \ref{fig:ex1} and \ref{fig:ex2} are crowded and non-planar, respectively, so have positive tropical crossing number.  The genus $5$ graph in Figure \ref{fig:ex3} has what is known as a heavy cycle, and so by \cite{Joswig2021} cannot be tropically planar.  Finally, the genus $6$ graph in Figure \ref{fig:ex4} is nonplanar. 

\begin{figure}[hbt]
    \centering
    \includegraphics[width=0.8\linewidth]{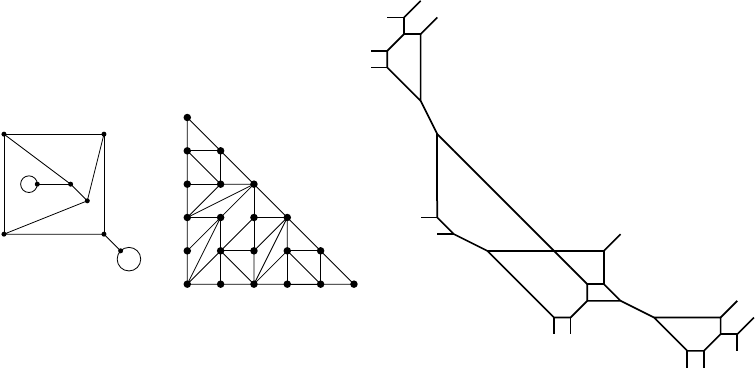}
    \caption{A genus 5 crowded graph alongside a nodal subdivision and its dual tropical curve which skeletonizes to the original graph.}
    \label{fig:ex1}
\end{figure}

\begin{figure}[hbt]
    \centering
    \includegraphics[width=0.8\linewidth]{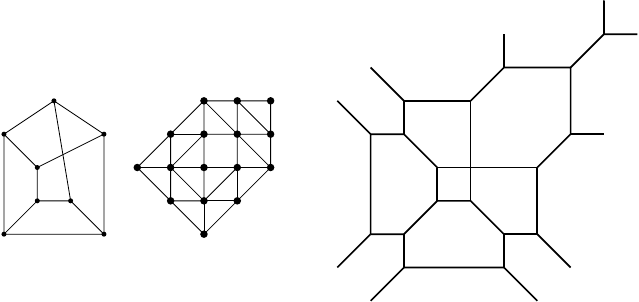}
    \caption{A genus 5 non-planar graph alongside a nodal subdivision and its dual tropical curve which skeletonizes to the original graph.}
    \label{fig:ex2}
\end{figure}

\begin{figure}[hbt]
    \centering
    \includegraphics[width=0.8\linewidth]{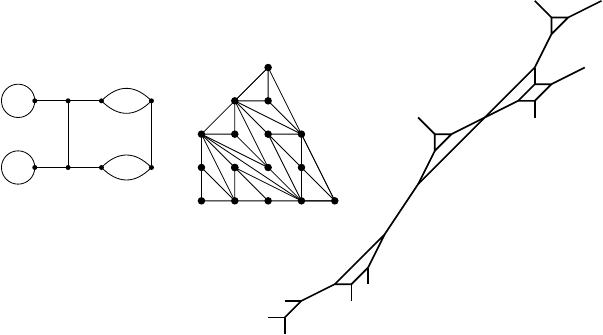}
    \caption{A genus 5 graph with a heavy-cycle alongside a nodal subdivision and its dual tropical curve which skeletonizes to the original graph.}
    \label{fig:ex3}
\end{figure}

\begin{figure}[hbt]
    \centering
    \includegraphics[width=0.8\linewidth]{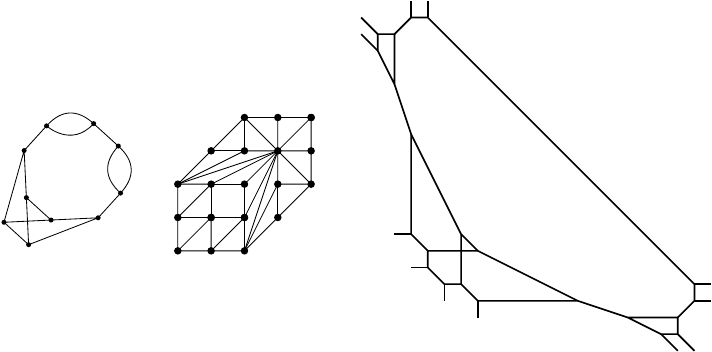}
    \caption{A genus 6 non-planar graph alongside a nodal subdivision and its dual tropical curve which skeletonizes to the original graph.}
    \label{fig:ex4}
\end{figure}

\end{document}